\newtheorem{thrm}{Theorem}[section]
\newtheorem{lemma}[thrm]{Lemma}
\newtheorem{prop}[thrm]{Proposition}
\newtheorem*{main*}{Main Theorem}
\theoremstyle{definition}
\newtheorem{defn}[thrm]{Definition}
\theoremstyle{remark}
\newtheorem*{question*}{Question}
\numberwithin{equation}{section}
\newcommand{\dbar}{$\bar{\partial}$}
\newcommand{\mdbar}{\bar{\partial}}
\newcommand{\Lb}{\overline{L}}
\newcommand{\omegab}{\bar{\omega}}
\newcommand{\lrc}{\mathcal{C}}
\newcommand{\lrs}{\mathcal{S}}
\newcommand{\opnorm}{\@ifstar\@opnorms\@opnorm}
\newcommand{\@opnorms}[1]{%
    \left|\mkern-1.5mu\left|\mkern-1.5mu\left|
    #1
    \right|\mkern-1.5mu\right|\mkern-1.5mu\right|
}
\newcommand{\@opnorm}[2][]{%
    \mathopen{#1|\mkern-1.5mu#1|\mkern-1.5mu#1|}
    #2
    \mathclose{#1|\mkern-1.5mu#1|\mkern-1.5mu#1|}
} \makeatother
\begin{document}

    \title
    {Exact regularity of the 
    $\mdbar$-problem 
  with dependence on the 
   $\mdbar_b$-problem on weakly pseudoconvex
          domains in $\mathbb{C}^2$.}

    \author
    {Dariush Ehsani}
    
    \date{}

 


    \maketitle

    \bibliographystyle{plain}

\section{Introduction}

We investigate the regularity of solutions, $u$, to the
\dbar-equation $\mdbar u =f$, for \dbar-closed $(0,1)$-forms $f$
on smoothly bounded weakly pseudoconvex domains
$\Omega\subset\mathbb{C}^2$.  Regularity of the forms and
functions are measured in terms of Sobolev norms: we denote by
$W^s(\Omega)$, respectively $W^s_{(0,1)}(\Omega)$, the space of
functions, respectively $(0,1)$-forms, whose derivatives of order
$\le s$ are in $L^2(\Omega)$.  In the case of smoothly bounded
strictly pseudoconvex domains, the canonical solution (the
solution of minimal $L^2$ norm) can be shown to provide a solution
operator which preserves the Sobolev spaces, $W^s(\Omega)$ for all
$s\ge 0$; estimates for the canonical solution are due to Kohn
(see \cite{K63} and \cite{K64}).  This is not the case in the
situation of smoothly bounded weakly pseudoconvex domains as shown
by Barrett in \cite{Ba92}.  And it is not just a loss of
derivatives which takes place; Christ has shown that the canonical
solution may not even be in $C^{\infty}(\overline{\Omega})$ even
if the data form $f$ is in $C^{\infty}_{(0,1)}(\overline{\Omega})$
\cite{Ch96}.

On the other hand, using weighted Sobolev spaces, Kohn showed that
for any given $s\ge 0$, there exists a 
 weight, $\phi$, and a
solution operator, $K_{s,\phi}$,
 (which depends on the weight as
well as level of the Sobolev norm) such that $K_{s,\phi}:
W^k(\Omega)\rightarrow W^k(\Omega)$ for all $k\le s$ and such that
$\mdbar\circ K_{s,\phi}=I$ when restricted to \dbar-closed forms
\cite{K73}.  These operators can then be used to construct a
solution operator which maps
$C^{\infty}_{(0,1)}(\overline{\Omega})$ to
$C^{\infty}(\overline{\Omega})$, but with this method a
continuous solution operator between Sobolev spaces can only be
obtained with a resulting loss of regularity.  This suggests the
question whether a linear solution operator which maps $W^s_{(0,1)}(\Omega)$
to $W^s(\Omega)$ simultaneously for all $s\ge 0$ (see
the discussion in Section 5.2 in \cite{St}):
\begin{question*}
	Let $\Omega\subset\mathbb{C}^n$ be a smoothly bounded
	pseudoconvex domain.  Let $W^s_{(p,q)}(\Omega)$
	denote the Sobolev $s$ space for $(p,q)$-forms,
	where $0\le p \le n$ and $1\le q\le n$.
	Does there exist a solution operator $K$ such that
	\begin{equation*}
	K:W^s_{(p,q)} (\Omega)\rightarrow W^s_{(p,q-1)}(\Omega)
	\end{equation*}
	for all $s\ge0$, and such that  $\mdbar K f = f$
	for any \dbar-closed $f\in L^2_{(p,q)}(\Omega)$?
\end{question*}

It is this question which we study in this article in the case of
weakly pseudoconvex domains in $\mathbb{C}^2$.  We mention here
that the regularity of a solution operator is limited by the case
of a preservation of the Sobolev levels; a gain of regularity
cannot be achieved on general pseudoconvex domains.  There are
examples of convex domains with analytic discs in the boundary
which exclude the existence of a compact solution operator to
\dbar\ \cite{FuSt98} as well the compactness of certain Hankel
operators \cite{CuSa09}, and thereby exclude a solution operator
to \dbar\ which provides for a gain of regularity.

In this article, we show an operator with the mapping properties stated in the
question above can be constructed on the subspace
$W^s_{(0,1)}(\Omega)\cap \mbox{ker }\mdbar$ if a solution operator
to the $\mdbar_b$-equation can be found with analogous regularity.
We define $A_b^s(\partial\Omega)$ to be the space
\begin{equation*}
A_b^s(\partial\Omega):= \left\{ \alpha \in
W^{s}_{(0,1)}(\partial\Omega) : \int_{\partial\Omega} \alpha\wedge
\phi = 0, \forall \phi\in C^{\infty}_{(2,0)}(\partial\Omega)\cap
\mbox{ker}(\mdbar_b)\right\}
\end{equation*}
(see for instance \cite{CS}, Theorem 9.3.1), with norm given by
the Sobolev $s$ norm, $\|\cdot\|_{W^{s}(\partial\Omega)}$.
\begin{main*}
	Let $\Omega\subset\mathbb{C}^2$ be a smoothly bounded
	pseudoconvex domain.  Suppose there exists a solution operator,
	$K_b$ such that $\mdbar_b K_b g = g$ for $g\in
	A^0_b(\partial\Omega)$ and
	$K_b: A^s_b(\partial\Omega) \rightarrow
	W^{s}(\partial\Omega)$ for all 
$s\ge -1/2$.   Then
	there exists a solution operator $K$ such that $\mdbar K f = f$
	for all $f\in L^2_{(0,1)}(\Omega)\cap\mbox{ker }\mdbar$
	and
	$
	K: W^s_{(0,1)}(\Omega)\cap \mbox{ker }\mdbar\rightarrow W^{s}(\Omega)
	$ for all $s \ge 0$.
\end{main*}

The idea behind the proof is to base the construction of the
solution operator on the solution to a boundary value problem,
much as the solution to the canonical solution is based on the
\dbar-Neumann problem.  The \dbar-Neumann problem is defined as
follows.  Let $\vartheta$ denote the formal adjoint of $\mdbar$.
Let $\square=\vartheta\mdbar+\mdbar\vartheta$. The \dbar-Neumann
problem is the boundary value problem:
\begin{equation*}
\square u = f \quad \mbox{in }\Omega
\end{equation*}
with the boundary conditions
\begin{align*}
& \mdbar u \rfloor \mdbar \rho = 0,\\
&  u \rfloor \mdbar \rho = 0,
\end{align*}
where $\rho$ is a smooth defining function:
$\Omega=\{z\in\mathbb{C}^2:\rho(z)<0\}$.  Let $N$ denote the
solution operator to the \dbar-Neumann problem, i.e. as written
above, $u=Nf$.  Then $\vartheta N$ provides a solution operator to
the \dbar-equation.

As was mentioned above,
 the solution operator $\vartheta N$ does not
satisfy the conclusions of the Main Theorem.  Our approach in this
article is to relax the boundary conditions  (we eliminate the
second, Dirichlet-type, condition).

We use the technique of 
 reducing a boundary value problem
to a problem exclusively on the boundary, 
 using a Green's operator and Poisson's 
operator related to the $\square$ operator.
 The inspiration for this reduction comes from
  \cite{CNS92}.
 Several properties of the Green's operator
  and Poisson operator have been worked out
in \cite{Eh18_halfPlanes, Eh18_dno}.
 In fact, the properties
of boundary value operators stemming from
 the Poisson's operator, in particular 
regarding the Dirichlet to Neumann operator
(DNO), defined as giving the inward normal 
 derivative at the boundary
 to the solution to a Dirichlet problem,
as well as properties of 
 the Green's operator 
 motivate our particular solution.

The beginnings of this work was 
 initiated while the author was at the
University of Wuppertal and the hospitality of the University and
its Complex Analysis Working Group is sincerely appreciated.  The
author particularly thanks Jean Ruppenthal for his warm and
generous invitation to work with his group.  A visit to the
Oberwolfach Research Institute in 2013 as part of a Research in
Pairs group was also helpful in the formation of this article, for
which the author extends gratitude to the Institute as well as to
S\"{o}nmez \c{S}ahuto\u{g}lu for helpful discussions on various
mathematical topics many of which are included below, as well as
for bringing to my attention the relevant results of \cite{FuSt98}
and \cite{CuSa09}.

\section{Background information}

\label{notn}

We take a moment to fix the notation used throughout the article.
Our notation for derivatives is $\partial_t:=\frac{\partial}{\partial t}$.
We also use the index notation for derivatives: with
$\alpha=(\alpha_1,\ldots,\alpha_n)$ a multi-index
\begin{equation*}
\partial^{\alpha}_x = \partial_{x_1}^{\alpha_1}\cdots
\partial_{x_n}^{\alpha_n}.
\end{equation*}

We let $\Omega\subset\mathbb{R}^n$ and
define pseudodifferential operators on $\Omega$ as in \cite{Tr}:
\begin{defn}
	We denote by
	$\mathcal{S}^{\alpha}(\Omega)$
	the space of symbols
	$a(x,\xi)\in C^{\infty}(\Omega\times\mathbb{R}^n)$ which
	have the property that for any given compact set, $K\subset\Omega$, and for
	any $n$- tuples $k_1$ and $k_2$, there is a
	constant $c_{k_1,k_2}(K)>0$ such that
	\begin{equation*}
	\left| \partial_{\xi}^{k_1} \partial_{x}^{k_2} a(x,\xi) \right|
	\le c_{k_1,k_2}(K) \left( 1+|\xi|
	\right)^{\alpha-|k_1|}
	\qquad \forall x\in K,\ \xi\in\mathbb{R}^n.
	\end{equation*}
\end{defn}
Associated to the symbols in class $\mathcal{S}^{\alpha}(\Omega)$
are the pseudodifferential operators, denoted by
$\Psi^{\alpha}(\Omega)$ defined in
\begin{defn}
	\label{defnop}
	We say an operator $A: \mathscr{E}'(\Omega)\rightarrow
	\mathscr{D}'(\Omega)$ is in class
	$\Psi^{\alpha}(\Omega)$
	if $A$ can be written as an integral operator with symbol
	$a(x,\xi)\in \mathcal{S}^{\alpha}
	(\Omega)$:
	\begin{equation}
	\label{defnom}
	Au (x) = \frac{1}{(2\pi)^n} \int_{\mathbb{R}^n} a(x,\xi)
	\widehat{u}(\xi) e^{ix\cdot \xi} d\xi.
	\end{equation}
\end{defn}

In our use of Fourier transforms and equivalent symbols we find
cutoffs useful in order to make use of local coordinates, one of
which being a defining function for the domain.  Let $\chi(\xi)\in
C_0^{\infty}(\mathbb{R}^n)$ be such that $\chi\equiv 1$ in a
neighborhood of 0 and $\chi\equiv 0$ outside of a compact set
which includes 0.  Then we reserve the notation $\chi'$ to denote
functions which are 0 near the origin: $\chi'(\xi):=1-\chi(\xi)$
where $\chi(\xi)$ is as defined above.

We use $\widetilde{\ \ }$ to indicate transforms in tangential
directions.  Let $p\in\partial\Omega$ and let
$(x_1,\ldots,x_{n-1},\rho)$ be local coordinates around $p$,
$(\rho<0)$.  Let $\chi_p(x,\rho)$ denote a cutoff which is $\equiv
1$ near $p$ and vanishes outside a small neighborhood of $p$ on
which the local coordinates $(x,\rho)$ are valid.  Then with $u\in
L^2(\Omega)$ we write
\begin{align*}
& \widehat{\chi_p u}(\xi,\eta)=
\int \chi_p u(x,\rho) e^{-ix\xi} e^{-i\rho\eta} dx d\rho\\
&\widetilde{\chi_p u}(\xi,\rho)=
\int \chi_p u(x,\rho) e^{-ix\xi}  dx.
\end{align*}
We also use the $\widetilde{\ \ }$ notation when describing
transforms of functions supported on the boundary.  With notation
and coordinates as above, we let $u_b(x)\in L^2(\partial\Omega)$
and write
\begin{equation*}
\widetilde{\chi_p(x,0) u_b}(\xi)=
\int \chi_p(x,0) u_b(x) e^{-ix\xi}  dx.
\end{equation*}

If we let
$\chi_j$ be such that $\{\chi_j\equiv 1\}_j$ is a covering of
$\Omega$, and let $\varphi_j$ be a partition of unity subordinate
to this covering, then locally, we describe 
an operator
$A: \mathscr{E}'(\Omega)\rightarrow
\mathscr{D}'(\Omega)$ in terms of its symbol,
$a(x,\xi)$ according to
\begin{equation*}
Au = \frac{1}{(2\pi)^n} \int a(x,\xi) \widehat{\chi_j
	u}(\xi)
d\xi
\end{equation*}
on $\mbox{supp }\varphi_j$.  Then we can describe the operator $A$
globally on all of $\Omega$ by
\begin{equation}
\label{alld}
Au = \frac{1}{(2\pi)^n} \sum_j \varphi_j \int a(x,\xi) \widehat{\chi_j
	u}(\xi)
d\xi.
\end{equation}
The difference arising between the definitions in \eqref{defnom}
and \eqref{alld} is a smoothing term \cite{Tr}, which we write as
$\Psi^{-\infty}u$, to use the notation of Definition
\ref{defnop}.

Pseudodifferential operators on the 
 boundary, that is, in the class
$\Psi^k(\partial\Omega)$ for some $k$,
will be marked with a subscript "b".
 Thus if $\phi_b$ is a distribution
  with support on $\partial\Omega$,
$\Psi^{-1}_b \phi_b $ denotes an 
 operator in class $\Psi^{-1}(\partial\Omega)$
  applied to $\phi_b$.

We follow \cite{CNS92} in setting up our boundary value problem
(which is similar to the setup of the \dbar-Neumann problem).
 We let
$\rho\in C^{\infty}$ be a defining function for
$\Omega$: $\Omega=\{z\in\mathbb{C}^2:
\rho(z)< 0\}$,
$\Omega$, a smoothly bounded pseudoconvex
domain.
We let $U$ be an open neighborhood of $\partial\Omega$ such that
\begin{align*}
&\Omega \cap U = \{ z\in U| \rho(z)<0 \};\\
& \nabla \rho(z) \neq 0 \qquad \mbox{for } z\in U.
\end{align*}

We define an orthonormal 
 (with respect to the Euclidean
  metric) 
 frame of $(1,0)$-forms on a neighborhood
$U$ with $\omega_1,\omega_2$ where
$\omega_2=\sqrt{2}\partial\rho$, and $L_1, L_2$ the dual frame.
We thus can write
\begin{align}
\nonumber
L_1&=\frac{1}{2}(X_1-iX_2) + O(\rho)\\
\label{l2}
L_2&= \frac{1}{\sqrt{2}} \frac{\partial}{\partial\rho}
+iT+O(\rho)
\end{align}
where $\partial/\partial\rho$ is the vector field dual to
$d\rho$, and $X_1$, $X_2$, and $T$ are tangential fields.
The special tangential operator
$T=\frac{1}{2i} (L_2 - \Lb_2)$
will receive extra attention in this
paper.  
We also use the notation
$L_{bj}$ to denote 
$L_j$ restricted to $\rho=0$,
and $T^0 = T\big|_{\rho=0}$.
We can expand the vector fields $L_1$ and $T$ as in
\cite{CNS92} as
\begin{align*}
L_1 =& L_1^0 + \rho L_1^1 + \cdots\\
L_2 = & \frac{1}{\sqrt{2}} \frac{\partial}{\partial\rho} + i\left(
T^0 + \rho T^1 + \cdots\right).
\end{align*}
We then choose coordinates $(x_1,x_2,x_3)$ on $\partial\Omega$ near a
point $p\in\partial\Omega$, in
terms of which
the vector fields $L_1^0$ and $T^0$ are given by
\begin{align*}
T^0=&\frac{\partial}{\partial x_3}\\
L_1^0=& \frac{1}{2} \left( \frac{\partial}{\partial x_1} -i
\frac{\partial}{\partial x_2} \right) + O(x-p).
\end{align*}
To emphasize the 0 superscript refers to restriction to 
 the boundary, we will use the notation
\begin{align*}
& L_{b1}:= L_1^0 = L_1\big|_{\rho=0}\\
& \Lb_{b1}:= \Lb_1^0 = \Lb_1\big|_{\rho=0}.
\end{align*}
We also use the notation, $R$, to 
 denote the restriction to the boundary
operator.  Thus,
\begin{equation*}
 R\circ \Lb_1 \equiv \Lb_{b1}.
\end{equation*}

 We define the scalar function $s$ by
\begin{equation*}
\mdbar \bar{\omega}_1
= s \bar{\omega}_1 \wedge \bar{\omega}_2.
\end{equation*}
With respect to the coordinates, $z_1$ and $z_2$
\begin{align*}
\bar{\omega}_1=& \sqrt{2} \left( \frac{\partial\rho}{\partial z_2} d\bar{z}_1
- \frac{\partial\rho}{\partial z_1} d\bar{z}_2\right),\\
\mdbar \bar{\omega}_1
=& - \sqrt{2}\left(\frac{\partial^2\rho}{\partial \bar{z}_1 \partial z_1}
+\frac{\partial^2\rho}{\partial \bar{z}_2 \partial z_2} \right)
d\bar{z}_1\wedge d\bar{z}_2\\
=& - 2 \sqrt{2}\left(\frac{\partial^2\rho}{\partial \bar{z}_1
	\partial z_1} +\frac{\partial^2\rho}{\partial \bar{z}_2 \partial
	z_2} \right) \bar{\omega}_1\wedge \bar{\omega}_2,
\end{align*}
and so
\begin{equation*}
s(z_1,z_2)
= - 2\sqrt{2} \left(\frac{\partial^2\rho}{\partial \bar{z}_1 \partial z_1}
+\frac{\partial^2\rho}{\partial \bar{z}_2 \partial z_2} \right).
\end{equation*}

If we write a $(0,1)$-form, $u$, as
\begin{equation*}
 u = u_1\omegab_1 + u_2\omegab_2,
\end{equation*}
and its boundary values, $u_b:=Ru$ as
\begin{equation*}
u_b = u_b^1\omegab_1 + u_b^2\omegab_2,
\end{equation*}
the boundary condition
 $\mdbar u \rfloor \mdbar \rho = 0$ in
the \dbar-Neumann problem can be expressed
as
\begin{equation*}
 \Lb_2 u_b^1 - s_0u_b^1 -\Lb_1 u_b^2 =0,
\end{equation*}
where $s_0:= R s$.

On the boundary of our domain in $\mathbb{R}^4$,
we further break up the transforms with the use of the following
microlocal decomposition into three regions, as in for
instance
\cite{Ch91, KoNi06, K85, Ni06}.  We write $\xi_{1,2}:=(\xi_1,
\xi_2)$, and
define the three regions
\begin{align*}
& \lrc^+ = \left\{\xi \big| \xi_3 \ge \frac{1}{2}|\xi_{1,2}|,\ |\xi|\ge 1 \right\}\\
& \lrc^0 = \left\{\xi \big| -\frac{3}{4}|\xi_{1,2}| \le \xi_3 \le \frac{3}{4} |\xi_{1,2}|\right\}
\cup    \{\xi \big| |\xi|\le 1\}\\
& \lrc^- = \left\{\xi \big| \xi_3 \le -\frac{1}{2}|\xi_{1,2}|,\ |\xi|\ge 1\right\}.
\end{align*}
Associated to the three regions we define the functions
$\psi^+(\xi)$, $\psi^0(\xi)$, and $\psi^-(\xi)$ with the
following properties: $\psi^+,\psi^0,\psi^-
\in C^{\infty}$, are symbols of order 0
with values in $[0,1]$,
$\psi^+$ (resp. $\psi^0$, resp. $\psi^-$) restricted to
$|\xi|=1$ has compact support in $\lrc^+\cap \{|\xi|=1\}$ (resp.
$\lrc^0\cap \{|\xi|=1\}$, resp. $\lrc^-\cap \{|\xi|=1\}$)
with $\psi^-(\xi)=\psi^+(-\xi)$
and $\psi^0$ is given by $\psi^0(\xi)= 1
-\psi^+(\xi) - \psi^-(\xi)$.
Furthermore,
 for $|\xi|\ge r$ for some $r<1$,
  $\psi^-(\xi)=
  \psi^-\left(\frac{\xi}{|\xi|}\right)$
(resp.
$\psi^0(\xi)= \psi^0\left(\frac{\xi}{|\xi|}\right)$,
$\psi^+(\xi)=\psi^+\left(\frac{\xi}{|\xi|}\right)$).
 $\psi^0(\xi) \equiv 1$ in a neighborhood 
  of the origin,
and the relation
 $\psi^0(\xi)+\psi^+(\xi) + \psi^-(\xi)= 1$
  is to hold on all of $\mathbb{R}^3$.

The support of $\psi^0$ is contained in $\lrc^0$, and from the
above requirements we have the support of $\psi^+$ (resp.
$\psi^-$) is contained in $\lrc^+\cup \{|\xi| \le 1\}$ (resp. $\lrc^- \cup \{c\le |\xi| \le
1\}$).   We make the further restrictions that the supports of
$\psi^+$ and $\psi^-$ are contained in conic neighborhoods; we
define
\begin{align*}
& \widetilde{\lrc}^+ = \left\{\xi \big| \xi_3 \ge \frac{1}{2}|\xi_{1,2}| \right\}\\
& \widetilde{\lrc}^- = \left\{\xi \big| \xi_3 \le -\frac{1}{2}|\xi_{1,2}|\right\}.
\end{align*}
We also assume that the support of $\psi^+$ and $\psi^-$ are
contained in $\widetilde{\lrc}^+$ and $\widetilde{\lrc}^-$,
respectively, such that the restrictions,
$\psi^+\big|_{\{|\xi|\le 1\}}$ and $\psi^-\big|_{\{|\xi|\le 1\}}$
have support which is relatively compact in the interior of the
regions $\widetilde{\lrc}^+$ and $\widetilde{\lrc}^-$,
respectively.

We note that due to the radial extensions from the unit sphere,
the functions $\psi^-(\xi)$, $\psi^0(\xi)$, and $\psi^+(\xi)$
are symbols of zero order pseudodifferential operators.
The operator $\Psi^+$ (resp. $\Psi^-$)
is defined as the operator with symbol $\psi^+$ (resp.
$\psi^-$).  We do not have need for the
operator defined by the symbol $\psi^0$ and as the above notation
would conflict with our notations of generic pseudodifferential
operators of order 0, we have left out this operator.

We will make the
assumption that $\psi^-\equiv 1$ in a neighborhood of
$\lrc^-\cap (\lrc^0)^c$.  This is to ensure that operators
formed by commutators with $\Psi^-$ have symbols whose restrictions to
the sphere $|\xi|=1$ have compact support in the
region $\lrc^-\cap \lrc^0\cap \{|\xi|=1\}$.

Similarly, we define $\psi^-_D(\xi)\in C^{\infty}(\widetilde{\lrc}^-)$ with the property
$\psi^-_D(\xi)= \psi^-_D(\xi/|\xi|)$ for $|\xi|\ge 1$,
and such that
$\psi^-_D \equiv 1$ on $\mbox{supp } \psi^-$.  And, as with $\psi^-$, the restriction to
the disc, $\psi^-_D\big|_{\{|\xi|\le 1\}}$, has relatively compact support
in the interior of $\widetilde{\lrc}^-$.
We shall have the occasion to use the operator defined by the
symbol $\psi^-_D(\xi)$.  This operator we denote $\Psi^-_D$.  In
the terminology of \cite{K85} we say $\Psi^-_D$ {\it dominates}
$\Psi^-$.

We further use the notation
$ u^{\psi^-}$
as a short-hand for $\Psi^- u$, with similar meanings for
$u^{\psi^0}$ and $u^{\psi^+}$.
The use of $ u^{\psi^-}$ (resp. $ u^{\psi^0}$, $u^{\psi^+}$)
thus has the advantage of allowing us to consider the symbol of
a boundary operator in only one microlocal region, $\widetilde{\lrc}^-$ (resp.
$\lrc^0$, resp. $\widetilde{\lrc}^+$); naturally it holds that
$u=u^{\psi^-}+u^{\psi^0}+u^{\psi^+}$, modulo smooth terms.
We shall use such microlocalizations in Section \ref{secTheBvp} to
obtain a solution to the boundary value problem as a sum of three
terms, each solving an equation relating to symbols whose
transform variables are restricted to one of $\widetilde{\lrc}^- $, $\lrc^0$,
or $\widetilde{\lrc}^+$.

\section{A modified \dbar-Neumann type boundary value problem}

The \dbar-Neumann problem is the vector-valued 
 (with forms written as vectors)
  boundary-value
problem:
\begin{equation*}
\square u = f \qquad \mbox{in}\ \ \Omega,
\end{equation*}
where $\square=\vartheta\mdbar
+ \mdbar\vartheta$,
with the boundary conditions
\begin{align*}
\nonumber
\overline{L}_2 u_1 -su_1=&0 \\
u_2=&0
\end{align*}
on $\partial\Omega$.  In our modified problem, we elimiate the
condition $u_2=0$ on the
boundary; this leads to the consideration of forms $u$ which are no longer
in the domain of $\mdbar^{\ast}$ and it is for this reason we
describe
the operator $\square$ in terms of the formal adjoint, rather than
with $\mdbar^{\ast}$ as is common in the theory of the
\dbar-Neumann problem (note that on  $\mbox{dom}(\mdbar^{\ast})$,
we have $\mdbar^{\ast}=\vartheta$).  We now describe the modified problem.

We
consider
\begin{equation*}
\square u = f \qquad \mbox{in }\Omega,
\end{equation*}
with the boundary conditions
\begin{align}
\label{bvdar}
\overline{L}_2 u_1 -s_0 u_1-\Lb_1u_2=&0.
\end{align}
With the help of Green's operator and Poisson's 
 operator we can reduce the boundary value problem to the
boundary (see also \cite{CNS92, Eh18_dno}).

We denote by $P$ a Poisson's operator for the boundary value
problem
\begin{align*}
&2\square \circ P  = 0 \qquad \mbox{on } \Omega\\
& R\circ P = I \qquad 
\mbox{on } \partial\Omega.
\end{align*}
The operators $P_1$ and
$P_2$ denote respectively the first and second components of
the solution given by the operator $P$:
\begin{equation*}
P(u_b) = P_1(u_b)\omegab_1 + P_2(u_b)\omegab_2.
\end{equation*}
The DNO, given by the derivative of the Poission's operator
restricted to the boundary,
\begin{equation*}
N^-u_b= R\circ \frac{\partial}{\partial\rho}  P(u_b),
\end{equation*}
where $R$ denotes the operation of restriction to the boundary,
is thus a matrix of operators.  We concentrate on the first
component and write
\begin{equation}
 \label{DNOfirstComponent}
R\circ \frac{\partial}{\partial\rho}  P_1(u_b)
= N^-_1u_b^1 + N^-_2u_b^2,
\end{equation}
where $N^-_1$ is the $(1,1)$ entry of the DNO matrix operator
and $N_2^-$ the $(1,2)$ entry.
 
We define
 the symbol of class $\lrs^1(\partial\Omega)$
\begin{equation*}
 |\Xi(x,\xi)| = \sqrt{-2\sigma(T^0)^2 -
  2\sigma(L_1^0)\sigma(\Lb_1^0)}
  \end{equation*}
 and the corresponding operator, $|D|$, by
\begin{equation*}
 \sigma(|D|) = |\Xi(x\xi)|.
\end{equation*}
 From Theorem 4.4 \cite{Eh18_dno}
\begin{thrm}
	\label{thrmdno1}
	\begin{equation}
	\label{dnoprincipal}
	N^- g = |D| g_b+
	\Psi^0_b g_b + R^{-\infty}_b,
	\end{equation}
with
\begin{equation*}
 \| R_b^{-\infty} \|_{W^s(\partial\Omega)}
  \lesssim \|g\|_{L^2(\partial\Omega)}
\end{equation*}
 for all $s\ge 0$.
\end{thrm}
 The first term on the right-hand side is understood to mean 
a diagonal operator with diagonal entries given by the
 operator $|D|$.
In particular, 
\begin{equation*}
 \sigma(N_1^-) = |\Xi(x,\xi)|.
\end{equation*}

We have the following well-known 
 estimates for the Poisson operator (see also
  Theorem 4.3 \cite{Eh18_dno}):
\begin{thrm}
	\label{poissest}
	For $s\ge 0$
	\begin{equation*}
	\| P (g) \|_{W^{s+1/2}(\Omega)} \lesssim
	\|g_b\|_{W^{s}(\partial \Omega)}.
	\end{equation*}
\end{thrm}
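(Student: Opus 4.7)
The plan is to combine the representation $P(g) = \Psi^{-1}g$ provided by Theorem \ref{poiss} with the boundary-to-interior estimate of Lemma \ref{estinvell}. Here, as in that theorem, $g = g_b\times\delta(\rho)$ in local coordinates, so we are in precisely the setting in which Lemma \ref{estinvell} applies: a pseudodifferential operator of strictly negative order acting on a boundary distribution lifted by $\delta$.

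To invoke Lemma \ref{estinvell} I would first verify that the $\Psi^{-1}$ operator in question satisfies the hypothesis of Lemma \ref{symbolcut}. In the proof of Theorem \ref{poiss} this operator is built from the parametrix $\Gamma^{\sharp}$ whose symbol \eqref{gamsharpsym} is $1/(1+\eta^{2}+\Xi^{2}(x,\xi))$; its $\eta$-roots are $\pm i\sqrt{1+\Xi^{2}(x,\xi)}$, and the principal part \eqref{symlam} of $P$ has pole at $\eta = -i|\Xi(x,\xi)|$. Each such pole is the symbol of an elliptic tangential operator of order $1$, so the hypothesis of Lemma \ref{symbolcut} is met. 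Applying Lemma \ref{estinvell} with $k=-1$, with its parameter $s$ replaced by $s+1/2$, now gives at once
\begin{equation*}
 \|P(g)\|_{W^{s+1/2}_{loc}(\Omega)} \lesssim \|g_b\|_{W^{(s+1/2)+(-1)+1/2}(\partial\Omega)} = \|g_b\|_{W^{s}(\partial\Omega)}.
\end{equation*}
The lower-order symbols in the asymptotic expansion of $\sigma(P)$, including the $\rho$-factors handled by Lemma \ref{liglem}, yield bounds with strictly smaller Sobolev loss and are absorbed into the estimate above.

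The remaining work is largely organizational. Since $\overline{\Omega}$ is compact, a finite open cover and subordinate partition of unity upgrade the $W^{s+1/2}_{loc}$ bound to the global $W^{s+1/2}(\Omega)$ estimate in the statement. The smoothing remainders $R^{-\infty}$ produced in the proof of Theorem \ref{poiss} are by their very construction dominated in every Sobolev norm by $\|g_b\|_{L^2(\partial\Omega)}$ (and hence by $\|g_b\|_{W^s(\partial\Omega)}$), so they are harmless.

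The main point requiring care is the justification of the passage from the local identity \eqref{locpoiss}, namely $v = \Psi^{-1}g + \Psi^{-1}v + R^{-\infty}$, to the clean representation $v = \Psi^{-1}g + R^{-\infty}$ on which the estimate rests. This amounts to inverting $I-\Psi^{-1}$ modulo smoothing, which is routine in the pseudodifferential calculus because $\Psi^{-1}$ has strictly negative order, but one must check that the resulting Neumann-series operator still falls into the class treated by Lemma \ref{estinvell}. This is a mechanical verification from the symbol calculus and does not introduce any new analytic difficulty, so the estimate follows.
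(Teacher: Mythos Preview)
Your proposal is correct and follows essentially the same strategy as the paper's proof: represent $P(g)$ as a pseudodifferential operator of strictly negative order acting on $g_b\times\delta(\rho)$ and then invoke Lemma \ref{estinvell}. The only cosmetic difference is packaging: the paper goes back to the representation $v=\Gamma^{-1}(\Psi^1_t g)$ (an order $-2$ parametrix applied to an order $1$ tangential operator on $g$) and applies Lemma \ref{estinvell} term-by-term to the expansion of $\Gamma^{-1}$, whereas you take the composite $\Psi^{-1}$ from Theorem \ref{poiss} directly. The paper's choice sidesteps the verification you flag at the end---that the Neumann-series inverse $(I-\Psi^{-1})^{-1}$ preserves the pole hypothesis of Lemma \ref{symbolcut}---since $\Gamma^{-1}$ has the required pole structure by construction; but as you note, this check is routine symbol calculus, so the two arguments are equivalent in substance.
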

Furthermore, the principal 
term of the Poisson operator is
calculated in \cite{Eh18_dno}. 
We define $\Theta^+$ to be the 
operator with symbol
\begin{equation*}
\sigma(\Theta^+) =    \frac{i}
{\eta + i|\Xi(x,\xi)|}.
\end{equation*} 
Then we can write
\begin{equation}
\label{formPoisson}
Pg = \Theta^+g + \Psi^{-2}g + R^{-\infty}
\end{equation}
where $R^{-\infty}$ denotes
smooth terms which can be estimated
by
\begin{equation*}
\| R^{-\infty} \|_{W^s(\Omega)}
\lesssim \| g\|_{L^2(\partial \Omega)}
\end{equation*}
for all $s\ge 0$ (see Theorem 4.1 in
\cite{Eh18_dno}).

 We define the Green's operator corresponding to
$2\square$ as a solution operator,
$G$ mapping $(0,1)$-forms on $ \Omega$
to $(0,1)$-forms on $\Omega$, to
\begin{align*}
&2\square\circ G =I\\
&R\circ G = 0.
\end{align*}
  If $f=f_1 \bar{\omega}_1 + f_2
\bar{\omega}_2$, we write
\begin{equation*}
G(f)=G_1(f) \bar{\omega}_1 + G_2(f) \bar{\omega}_2,
\end{equation*}
where
\begin{align*}
G_1(f) & = G_{11}(f_1) + G_{12}(f_2)\\
G_2(f) & = G_{21}(f_1) + G_{22}(f_2).
\end{align*}

From Theorem 3.2 
 in \cite{Eh18_halfPlanes}
\begin{thrm}
	\label{greenest}
	Let $G(f)$ denote the solution, $u$, to 
	the boundary value problem 
$\square u=f$ with the boundary condition
 $u=0$ on $\partial\Omega$.  Then
	\begin{equation*}
	\| G (f) \|_{W^{s+2}(\Omega)} \lesssim
	\|f\|_{W^{s}(\Omega)},
	\end{equation*}
	for $s\ge 0$.
\end{thrm}
And from Theorem 3.3 in \cite{Eh18_halfPlanes},
 \begin{thrm}
	\label{greender}
	Let $\Theta^{-}\in \Psi^{-1}(\Omega)$ be the operator with symbol
	\begin{equation*}
	\sigma(\Theta^{-}) = \frac{i}{\eta-i|\Xi(x,\xi)|}.
	\end{equation*}
	Then
	\begin{equation}
	\label{eqngreender}
	R \circ \frac{\partial}{\partial \rho}
	\circ G (g) =  R\circ \Theta^{-} g
	+\Psi_b^{-1}\circ
	 R\circ \Psi^{-1}g + 
+ R\circ \Psi^{-2}g +R_b^{-\infty},
	\end{equation}
where $R_b^{-\infty}$ denote smooth
terms which can be
estimated by
\begin{equation*}
 \| R_b^{-\infty}\|_{W^s(\partial\Omega)}
  \lesssim \|g\|_{L^2(\Omega)}.
\end{equation*}
\end{thrm}

 We now follow \cite{CNS92} to
reduce to the boundary.  Recall the 
boundary condition:
\begin{equation}
\label{bv}
\overline{L}_2 u_1 -s_0 u_1-\Lb_1u_2=0.
\end{equation}
There are possibly many solutions to the boundary value problem
(note that as stated we leave the Dirichlet type condition open in
contrast to the \dbar-Neumann problem), and we will isolate one
particular approximate solution.

With the solution $u$ written
$u= u_1 \bar{\omega}_1 + u_2 \bar{\omega}_2$, recall we write its restriction to
$\partial\Omega$ as
\begin{equation*}
u_b = u_b^1 \bar{\omega}_1 + u_b^2 \bar{\omega}_2.
\end{equation*}

We consider Equation \ref{bv} microlocally and look for solutions
\begin{equation*}
u_b= u_b^- + u_b^0 + u_b^+
\end{equation*}
where $u_b^-$ can be written
\begin{equation*}
u_b^- = u_b^{1,-}\omegab_1 + u_b^{2,-}\omegab_2
\end{equation*}
and
$u_b^{j,-}$
are described in terms of
pseudodifferential operators whose symbols have support in
$\widetilde{\lrc}^-$ (later these operators will be seen to have the form
of compositions of the operators $\Psi^-$ or $\Psi_D^-$
with operators acting on the data form, $f$; we recall the convention that $\psi^-_D(\xi)
\equiv 1$ on $\mbox{supp }\psi^-$).  We of course have similar
meanings for $u^0$ and $u^+$.

A solution to
 $\square u =f$,
  under condition \eqref{bv} is given
by
\begin{equation}
\label{solnpg}
u=G(2f)+P(u_b),
\end{equation}
We write the boundary condition in terms of the first component of the
DNO as in \eqref{DNOfirstComponent}.

Then locally we can write condition \eqref{bv} as
\begin{align*}
0=&R\circ \left(\frac{1}{\sqrt{2}} \frac{\partial}{\partial\rho}
-iT^0\right)  u^1_b -s_0 u_b^1-\Lb_1u_b^2\\
=&R\circ \left(\frac{1}{\sqrt{2}} \frac{\partial}{\partial\rho}
-iT^0\right) \big(G_1(2f)
+ P_1(u_b) \big) -s_0 u_b^1 -\Lb_1u_b^2\\
=&  \frac{1}{\sqrt{2}} R\circ \Theta^{-} (2f_1)
  +
\left(\frac{1}{\sqrt{2}} N^-_1
-iT^0\right)u_b^1 +\Psi^0_b u_b^1
 -\Lb_1u_b^2
+ \frac{1}{\sqrt{2}} N^-_2u_b^2
,
\end{align*}
modulo $\Psi_b^{-1}\circ
R\circ \Psi^{-1} f$, 
$R\circ \Psi^{-2}f$, and 
  smooth terms $R_b^{-\infty}$,
using Theorem \ref{greender} in the last line.  We rewrite this
as
\begin{equation}
\label{bndrycndn}
\left(\frac{1}{\sqrt{2}} N^-_1
-iT^0\right)u_b^1 +
\Psi^0_b u_b^1 -\Lb_1u_b^2
+ \frac{1}{\sqrt{2}} N^-_2u_b^2 = -\frac{2}{\sqrt{2}}
R\circ \Theta^{-} f_1,
\end{equation}
modulo  
$\Psi^{-1}_b u_b^2$,
$\Psi_b^{-1}\circ
R\circ \Psi^{-1} f$, 
$R\circ \Psi^{-2}f$ and  
 the smooth terms $R_b^{-\infty}$.

Our approximate solution $u$, will be determined via
\eqref{solnpg}
by its boundary values.

\section{Relations among some boundary value operators}

We first examine the $N_2^-$ operator in 
 \eqref{bndrycndn} above.  From
  \cite{Eh18_dno},
$N_2^-$ can be written in the form
\begin{equation*}
 \frac{1}{2} \left(N_1^{-}\right)^{-1} \circ A_{12}
\end{equation*}
modulo lower order terms
(see the non-diagonal terms in Theorem 4.6 in \cite{Eh18_dno}),
where $A$ refers to the first order tangential 
 operator in $2\square$, restricted to
  $\partial\Omega$, and $A_{12}$, the operator in 
the $(1,2)$-entry.  From the discussion preceding
 Proposition 3.1 of \cite{Eh18_dno} 
 (see also (2.22) of \cite{CNS92}),
  we have
\begin{equation*}
 A_{12} =2 R\circ [L_2,\Lb_1] \qquad \mbox{{mod }} \Lb_{b1}.
\end{equation*}

Without loss of generality we assume the Levi matrix is diagonal,
so that immediately we have
\begin{equation*}
 \left<R\circ [L_2,\Lb_1],T^0\right> = 0,
\end{equation*}
where $\left<\cdot, \cdot\right>$ denotes the inner product of 
 vector fields. 
We also have
\begin{lemma}
	\label{lemL1Dot}
\begin{equation*}
 \left<[L_2,\Lb_1], L_1 \right>  
  =2i\left<[T,\Lb_1], L_1 \right>.
\end{equation*}
\end{lemma}
\begin{proof}
 From 
\begin{equation*}
 \left<[\Lb_2,\Lb_1], L_1 \right>  =0,
\end{equation*}
we have
\begin{align*}
  \left<[L_2,\Lb_1], L_1 \right>  =&
    \left<[\Lb_2,\Lb_1], L_1 \right> 
     +2i\left<[T,\Lb_1], L_1 \right>\\
     =& 2i\left<[T,\Lb_1], L_1 \right>.
\end{align*}
\end{proof}

\begin{lemma}
\label{lemNT}
\begin{equation*}
\Psi^-_D\circ 
 \left(N_1^-\right)^{-1}  \circ T^0 = -\frac{i}{\sqrt{2}}
 \Psi^-_D
 + \Psi^{-1}_b \circ \Lb_{b1}
\end{equation*}
modulo $\Psi^{-1}(\partial\Omega)$.
\end{lemma}
\begin{proof}
 Define 
\begin{equation*}
 \kappa := \frac{\sigma(\Lb_{b1}) \sigma(L_{b1}) } {\xi_3^2}.
\end{equation*}
Recall the symbol
 $\psi^-_D$ with support in the region
  $\xi_3\le -\frac{1}{2} |\xi_{1,2}|$.  
We take
 $U$ to be a small enough conic neighborhood
  of $(0,\mbox{supp } \psi^-_D)$.  
  In the conic neighborhood
$U$,
$\kappa<c$ for some $c<1$ and
 we have
\begin{align}
 \nonumber
|\Xi(x,\xi)| =&  \sqrt{2}|\xi_3| \sqrt{1+\kappa} \\
 \nonumber
=&  \sqrt{2} |\xi_3| \left(1+ \frac{1}{2}\kappa - \frac{1}{8}\kappa^2 +
\cdots\right)\\
\nonumber
=&\sqrt{2} |\xi_3|+ \sqrt{2} |\xi_3| \left(\frac{1}{2}\kappa - \frac{1}{8}\kappa^2 +
\cdots\right) \\
\label{XiExpressTLL}
=& \sqrt{2}\sigma(iT^0)+ 
\sqrt{2} \sigma(\Lb_{b1})
 \frac{\sigma(L_{b1})}{|\xi_3|}\left(\frac{1}{2} 
  - \frac{1}{8}\kappa + \cdots\right).
\end{align}
Since in the neighborhood, $U$,
 the infinite sum in parentheses
converges uniformly, and as 
$\psi^-_D \kappa
  \in
\mathcal{S}^0(\partial\Omega)$, we see that by differentiating the
power series the symbol given by
\begin{equation}
\label{bo-}
\sigma(B_{0}) = \psi^-_D(\xi) \frac{\sigma(L_{b1})}{|\xi_3|}\left(\frac{1}{2} - \frac{1}{8}\kappa +
\cdots\right)
\end{equation}
defines an operator $B_0\in
\Psi^0(\partial\Omega)$.

Dividing \eqref{XiExpressTLL} by
 $|\Xi(x,\xi)|$ 
 and reverting to operators yields
to highest order, i.e. modulo $\Psi^{-1}(\partial\Omega)$,
\begin{equation*}
( N_1^{-})^{-1}\circ T^0 = -\frac{i}{\sqrt{2}}
  + \Psi^{-1}_b \circ \Lb_{b1}
\end{equation*}
in the microlocal neighborhood defined
 by the support of $\psi^-_D$.
\end{proof}

\begin{lemma} 
 \label{lemmaThetaExpn}
Let $\Theta^-$ be defined as in 
 Theorem \ref{greender}.  Then
\begin{align*}
\Psi^-\circ \Theta^-  
  =& \frac{3}{4} \Psi^-\circ (N_1^-)^{-1} \circ
   R 
  -\frac{1}{\sqrt{2}} \Psi^-\circ (N_1^-)^{-1} \circ
  R \circ \Theta^{-} \circ  \Lb_2 \\
& + \Psi^-_D\circ \Psi_b^{-1}\circ
 \Lb_{b1}\circ\Psi^{-1}.
\end{align*}
\end{lemma}
\begin{proof}
\begin{align}
\nonumber
R\circ \Theta^{-} \circ  \Lb_2 \phi
=& \frac{i}{(2\pi)^2}
\int\frac{1}{\eta-i|\Xi(x,\xi)|}
\widehat{\Lb_2 \phi} e^{ix\xi} d\eta d\xi\\
\nonumber
=&\frac{i}{(2\pi)^2} \frac{1}{\sqrt{2}}
\int\frac{1}{\eta-i|\Xi(x,\xi)|}
\widetilde{\phi}(\xi,0) e^{ix\xi} d\eta d\xi\\
\nonumber
&-  \frac{1}{(2\pi)^2}
\int\frac{\frac{1}{\sqrt{2}}\eta-i\xi_3}
{\eta-i|\Xi(x,\xi)|}
\widehat{ \phi}(\xi,\eta) e^{ix\xi} d\eta d\xi\\
\label{RThetaL2}
=& -
\frac{3}{2\sqrt{2}}
R \phi - \left(
\frac{1}{\sqrt{2}} N_1^- +iT^0 
\right)\circ \Theta^- \phi.
\end{align} 
Now using Lemma \ref{lemNT} 
 for the last term, we can write
\begin{equation*}
 \Psi^- \circ \left(
 \frac{1}{\sqrt{2}} N_1^- +iT^0 
 \right)\circ \Theta^- \phi
   = \sqrt{2} \Psi^- \circ N_1^- \circ
    \Theta^- \phi + \Psi^-_D\circ 
  \Lb_{b1}\circ\Psi^{-1}\phi
\end{equation*}
Inserting this expression into 
 \eqref{RThetaL2} and rearranging yields the Lemma.
\end{proof}

\begin{lemma} 
\label{lemNLTL}
Modulo $\Psi^{-2}(\partial\Omega)$,
 \begin{equation*}
\Psi^-_D\circ
  \left[N_1^{-1},\Lb_{b1}\right]  
 = -i \sqrt{2}\Psi^-_D\circ
  (N_1^-)^{-2} \circ
 [T^0,\Lb_{b1}] + \Psi^{-2}_b \circ \Lb_{b1}
 \end{equation*}
\end{lemma}
\begin{proof}
 Using a symbol expansion, we see
\begin{align}
\nonumber
 \sigma_{-1}\left( \left[N_1^{-1},\Lb_{b1}\right]  \right)
  =& -i \Big( \partial_{\xi} \left(\Xi^2(x,\xi) \right)^{-\frac{1}{2}}
    \cdot \partial_x \sigma \left(
        \Lb_{b1}
     \right)\\
 \nonumber
     &\qquad -
   \partial_{x} \left(\Xi^2(x,\xi) \right)^{-\frac{1}{2}}
   \cdot \partial_{\xi} \sigma \left(
   \Lb_{b1}
   \right)  
 \Big) \\
\nonumber
=& \frac{i}{2|\Xi(x,\xi)|^3}
 \left(
 	 \partial_{\xi}  \Xi^2(x,\xi) 
 	\cdot \partial_x \sigma \left(
 	\Lb_{b1}
 	\right) -
 	\partial_{x}  \Xi^2(x,\xi) 
 	\cdot \partial_{\xi} \sigma \left(
 	\Lb_{b1}
 	\right)  
 \right)\\
\label{symbolN1Lb}
 =& -\frac{1}{2} \sigma_{-1} \left(
    \left(N_1^-\right)^{-3} \circ
      \left[ 
        (N_1^-)^2, \Lb_{b1}
      \right]
 \right).
\end{align}

Furthermore, since 
\begin{equation*}
  (N_1^-)^2 = -2(T^0)^2 - 2L_{b1}\Lb_{b1} 
\end{equation*}
modulo lower order terms,
we have
\begin{equation*}
 \left[ 
(N_1^-)^2, \Lb_{b1}
\right] =  - 4T^0\circ[T^0,\Lb_{b1}]
 + \Psi^1_b \circ \Lb_{b1}
\end{equation*}
modulo $\Psi^1(\partial\Omega)$.
Inserting this relation into 
 \eqref{symbolN1Lb} yields
\begin{equation*}
 \left[N_1^{-1},\Lb_{b1}\right] 
= 2(N_1^-)^{-2} \circ
 \left((N_1^-)^{-1} \circ T^0\right)  \circ
[T^0,\Lb_{b1}] + \Psi^{-2}_b \circ \Lb_{b1}
\end{equation*}
modulo lower order terms.
Using Lemma \ref{lemNT}, we can 
 replace the $(N_1^-)^{-1} \circ T^0$ 
term with
$-i/\sqrt{2}$, and we have
\begin{equation*}
\Psi^-_D\circ
\left[N_1^{-1},\Lb_{b1}\right] 
 = -i \sqrt{2}
 \Psi^-_D\circ
  (N_1^-)^{-2} \circ
 [T^0,\Lb_{b1}] + \Psi^{-2}_b \circ \Lb_{b1}
\end{equation*}
modulo lower order terms, which was to be
 proved.
\end{proof}

Combining Lemmas
 \ref{lemL1Dot} and \ref{lemNT} we
see that the operator
 $N_2^-$ is essentially equivalent
to the commutation operator
 $\left[N_1^{-1},\Lb_{b1}\right]$
composed with the absolute boundary
 derivative, $|D|$.
  We illustrate this in the following
 proposition:
\begin{prop}
	\label{propN2N1}
Modulo $\Psi^{-2}(\partial\Omega)$,
 \begin{equation*}
  \frac{1}{\sqrt{2}} 
  \Psi^-_D\circ N_2^- \circ (N_1^-)^{-1}
     =- \Psi^-_D\circ
       \left[N_1^{-1},\Lb_{b1}\right]  
     +\Psi^{-2}_b \circ \Lb_{b1}.
 \end{equation*}
\end{prop}
\begin{proof}
	From Lemma \ref{lemL1Dot} we have
\begin{equation*}
 [L_2,\Lb_1] = 2i [T,\Lb_1] + \Psi^0_b \circ 
  \Lb_{b1}.
\end{equation*}
Hence, with Lemma \ref{lemNLTL},
 we have, modulo 
 $\Psi^{-2}(\partial\Omega)$,
\begin{align*}
\frac{1}{\sqrt{2}}
\Psi^-_D\circ
 N_2^- \circ (N_1^-)^{-1}=&
  \frac{1}{2\sqrt{2}}
 \Psi^-_D\circ
   \left(N_1^{-}\right)^{-2} \circ A_{12}\\
 =&i\sqrt{2}
\Psi^-_D\circ
 \left(N_1^{-}\right)^{-2} \circ [T^0,\Lb_1]
+\Psi^{-2}_b \circ \Lb_{b1}\\
 =& -  \Psi^-_D\circ
  \left[N_1^{-1},\Lb_{b1}\right]  
 +\Psi^{-2}_b \circ \Lb_{b1}.
\end{align*}
\end{proof}

\section{The boudary solution with estimates}
\label{secTheBvp}

We return to \eqref{bndrycndn} 
and first look for solutions $u_b^{1,-}$ and
 $u_b^{2,-}$
for the equation corresponding to the region $\widetilde{\lrc}^-$:
\begin{align}
\nonumber
\left(\frac{1}{\sqrt{2}} N^-_1
-iT^0\right)u_b^{1,-} +&\Psi^0_b u_b^{1,-}
\\
\label{bndrycndn-}
& -\Lb_1u_b^{2,-}
+ \frac{1}{\sqrt{2}} N^-_2u_b^{2,-} =
-\frac{2}{\sqrt{2}}
\left(R \circ \Theta^{-} f_1\right)^{\psi^-}
,
\end{align}
modulo error terms.  We recall the notation from Section
\ref{notn} in which we write $\left(R \circ \Theta^{-} f_1\right)^{\psi^-}
=\Psi^- \circ 
 \left(R \circ \Theta^{-} f_1\right)$.

We first use Lemma \ref{lemmaThetaExpn}
 for the
 term $-\sqrt{2} R\circ\Theta^- f_1$, 
using the hypothesis that $f$ is \dbar-closed;
 for \dbar-closed $f$, we have the relation
 \begin{equation*}
 (\Lb_2 -s) f_1 - \Lb_1 f_2=0.
 \end{equation*}
We have
\begin{align*}
 -\frac{2}{\sqrt{2}}&
 \left(R \circ \Theta^{-} f_1\right)^{\psi^-}\\
   & \qquad = - \frac{3}{2\sqrt{2}}
     \Psi^-\circ (N_1^-)^{-1} \circ
   R f_1
   +\Psi^-\circ (N_1^-)^{-1} \circ
   R \circ \Theta^{-} \circ  \Lb_2 f_1\\
   & \qquad \quad + \Psi^-_D\circ \Psi_b^{-1}\circ
   \Lb_{b1}\circ R\circ\Psi^{-1}f_1\\
& \qquad = - \frac{3}{2\sqrt{2}}
\Psi^-\circ (N_1^-)^{-1} \circ
R f_1
+\Psi^-\circ (N_1^-)^{-1} \circ
R \circ \Theta^{-} \circ  \Lb_1 f_2\\
& \qquad \quad + \Psi^-_D\circ \Psi_b^{-1}\circ
\Lb_{b1}\circ R\circ\Psi^{-1}f_1 \\
& \qquad = - \frac{3}{2\sqrt{2}}
\Psi^-\circ (N_1^-)^{-1} \circ
R f_1
+\Psi^-_D\circ \Lb_{b1}\circ 
 \Psi_b^{-1}\circ R\circ \Psi^{-1}f
\end{align*}
modulo
 $\Psi^{-1}_b\circ R\circ \Psi^{-1}f$

The relation 
\eqref{bndrycndn-} can be read
as
\begin{align*}
\left(\frac{1}{\sqrt{2}} N^-_1
-iT^0\right)u_b^{1,-} +&\Psi^0_b u_b^{1,-}
 -\Lb_1u_b^{2,-}
+ \frac{1}{\sqrt{2}} N^-_2u_b^{2,-}\\ 
 & =
- \frac{3}{2\sqrt{2}}
\Psi^-\circ (N_1^-)^{-1} \circ
R f_1
+\Psi^-_D\circ \Lb_{b1}\circ 
\Psi_b^{-1}\circ\Psi^{-1}f
,
\end{align*}
modulo
$\Psi^{-1}_b\circ R\circ \Psi^{-1}f$.

We set
\begin{equation}
\label{defnu1-}
u_b^{1,-} := 0 
\end{equation}
and thus we have to choose  
$u_b^{2,-}$ which satisfies
\begin{align}
\nonumber
 \Lb_1u_b^{2,-} -& \frac{1}{\sqrt{2}}
   N^-_2u_b^{2,-}\\
\label{u2-Eqn}
&= - \frac{3}{2\sqrt{2}}
\Psi^-\circ (N_1^-)^{-1} \circ
R f_1
+\Psi^-_D\circ \Lb_{b1}\circ 
\Psi_b^{-1}\circ\Psi^{-1}f
\end{align}
modulo 
$\Psi^{-1}_b u_b^2$
and 
$\Psi_b^{-1}\circ R\circ \Psi^{-1} f$.

As $f$ is \dbar-closed,
there exists a $\phi\in L^2(\Omega)$ such that $\mdbar \phi = f$
as in \cite{Hor65}, and in
particular
we have
\begin{equation*}
\Lb_1 \phi_b = R f_1.
\end{equation*}
Thus, for $f\in W^{s}_{(0,1)}(\Omega)\cap \mbox{ker}(\mdbar)$, the
condition $R\circ f_1\in A_b^{s-1/2}(\partial\Omega)$ is satisfied
 (for $s-1/2<0$ we can use
  Equation 2.6 of \cite{MS01} in place of
 the Sobolev Trace Theorem to conclude
  $R\circ f_1 \in W^{s-1/2}(\partial\Omega)$;
 see \eqref{f1Est} below)
and according to the hypothesis on the regularity of $\mdbar_b$,
we can find a $\phi_b'\in W^{s-1/2}(\partial\Omega)$ such that
\begin{equation}
\label{defnphib}
\Lb_{b1} \phi_b' = R f_1.
\end{equation}
Furthermore, we have
\begin{equation*}
\Lb_{b1}  (\phi_b')^{\psi^-} =   
 R f_1^{\psi^-}
+ \Psi^0_0\phi_b'  ,
\end{equation*}
where $\Psi^0_0 = [\Lb_{b1},\Psi^-]$ is a zero order operator
which has a symbol such that the projection of the support of which
onto the second (transform)
component
is contained in $\lrc^0$ (and in fact has
strictly positive distance to the part of the boundary
$\partial\lrc^0 \cap  \{-\frac{3}{4}|\xi_{1,2}| = \xi_3\}$).
In general, we write $\Psi^k_0$ to denote an operator of order
$k$ whose symbol is such that the projection of its support
onto the second (transform)
component
is contained in $\lrc^0$.

We now commute the $\Lb_1$ operator through
 the first term on the right of
\eqref{u2-Eqn}:
\begin{align*}
 - \frac{3}{2\sqrt{2}}
 \Psi^-\circ \left(N_1^-\right)^{-1} \circ
 R f_1
=&  -\frac{3}{2\sqrt{2}}
 \left(N_1^-\right)^{-1} \circ
R f_1^{\psi^-}
\\
= & - \frac{3}{2\sqrt{2}}
\left(N_1^-\right)^{-1} \circ
\Lb_{b1} (\phi_b')^{\psi^-}
 + \Psi^{-1}_0 \phi_b'
\\
=& - \frac{3}{2\sqrt{2}} \Lb_{b1}
\circ \left(N_1^-\right)^{-1} \circ
 (\phi_b')^{\psi^-}\\
&- \frac{3}{2\sqrt{2}}
 \left[ 
   \left(N_1^-\right)^{-1}, \Lb_{b1}
 \right](\phi_b')^{\psi^-}+ \Psi^{-1}_0 \phi_b',
\end{align*}
modulo $\Psi_b^{-2}\circ R \circ f_1$.

The condition for $u_b^{2,-}$,
 given by \eqref{u2-Eqn}, becomes
\begin{align}
\nonumber
 \Lb_{b1}u_b^{2,-} - \frac{1}{\sqrt{2}}
&N^-_2u_b^{2,-} =
 - \frac{3}{2\sqrt{2}} \Lb_{b1}
\circ \left(N_1^-\right)^{-1} \circ
(\phi_b')^{\psi^-}\\
\nonumber
&
- \frac{3}{2\sqrt{2}}
\left[ 
\left(N_1^-\right)^{-1}, \Lb_{b1}
\right](\phi_b')^{\psi^-}
+\Psi^-_D\circ \Lb_{b1}\circ 
\Psi_b^{-1}\circ\Psi^{-1}f
\\
\label{u2-cndn}
&\qquad+ \Psi^{-1}_0 \phi_b'
+\Psi_b^{-1} \circ R\circ
\Psi^{-1} f+\Psi_b^{-2}\circ R \circ f_1,
\end{align}
modulo 
$\Psi^{-1}_b u_b^2$
.
This suggests we set
\begin{equation}
\label{u2set}
 u_{b}^{2,-}
  := -\frac{3}{2\sqrt{2}}
   \left(N_1^-\right)^{-1} 
  (\phi_b')^{\psi^-}
  +
   \Psi^-_D\circ 
   \Psi_b^{-1}\circ\Psi^{-1}f,
\end{equation}
where the second term is the explicit operator
 given in 
\eqref{u2-Eqn}.
With this choice of $u_b^{2,-}$ we compute,
 with the help of Proposition \ref{propN2N1},
\begin{align*}
 \frac{1}{\sqrt{2}}
  N^-_2u_b^{2,-} =&
   -\frac{3}{4}
   N^-_2\circ \left(N_1^-\right)^{-1} 
   (\phi_b')^{\psi^-} + \Psi^{-1}_b\circ
    R\circ \Psi^{-1}f \\
   =& \frac{3}{2\sqrt{2}} 
    \left[ 
    \left(N_1^-\right)^{-1}, \Lb_{b1}
    \right]
   (\phi_b')^{\psi^-}  
   +\Psi^{-2}_b \circ 
 \Lb_{b1} (\phi_b')^{\psi^-}
   + \Psi^{-1}_b\circ
   R\circ \Psi^{-1}f \\
=& \frac{3}{2\sqrt{2}} 
\left[ 
\left(N_1^-\right)^{-1}, \Lb_{b1}
\right]
(\phi_b')^{\psi^-}  
+\Psi^{-2}_b \circ 
R f_1
+ \Psi^{-1}_b\circ
R\circ \Psi^{-1}f\\
& +\Psi^{-2}_b \phi_b'.
\end{align*}

We thus have with the choice
 \eqref{u2set}
\begin{align*}
\Lb_{b1}u_b^{2,-} - \frac{1}{\sqrt{2}}
N^-_2u_b^{2,-} =&
- \frac{3}{2\sqrt{2}} \Lb_{b1}
\circ \left(N_1^-\right)^{-1} \circ
(\phi_b')^{\psi^-}\\
&-\frac{3}{2\sqrt{2}} 
\left[ 
\left(N_1^-\right)^{-1}, \Lb_{b1}
\right]
(\phi_b')^{\psi^-}  \\
&\Lb_{b1}
\circ \Psi^-_D\circ 
\Psi_b^{-1}\circ\Psi^{-1}f
+ \Psi^{-2}_b \phi_b'
\\
&
+\Psi_b^{-2} \circ R f_1
+\Psi_b^{-1} \circ R\circ
\Psi^{-1} f
\end{align*}
which is what was
desired, modulo an error term
$\Psi^{-1}_0\phi_b'$, which will 
handled by the choice of 
$u_b^{0}$.

We now turn to the boundary equations involving $u_b^{j,+}$ for
$j=1,2$, and look to solve
\begin{equation}
\label{bndrycndn+}
\left(\frac{1}{\sqrt{2}} N^-_1
-iT^0\right)u_b^{1,+} +\Psi^0_b u_b^{+} -\Lb_1u_b^{2,+}
= -\frac{2}{\sqrt{2}}
\left( R\circ \Theta^{-} f_1\right)^{\psi^+}
\end{equation}
modulo error terms involving $f$.

In $\widetilde{\lrc}^+$ we have
\begin{align*}
\sigma_1 \left( \frac{1}{\sqrt{2}} N_1^-
-iT^0 \right) =& \frac{1}{\sqrt{2}} |\Xi(x,\xi)| + \xi_3\\
\gtrsim& |\xi|,
\end{align*}
and since there exists a $c>0$ such that $\xi_3>c$ in $\mbox{supp
} \psi^+$, we can find a type of inverse to the operator
$\frac{1}{\sqrt{2}} N_1^- -iT^0 $.
With this in mind we define the symbol
\begin{align*}
\alpha^{\psi^+_D}(x,\xi) =& \frac{\psi^+_D(\xi)}{\sigma_1\left( \frac{1}{\sqrt{2}} N_1^-
	-iT^0  \right)}\\
=& \frac{\psi^+_D(\xi)} {
	\frac{1}{\sqrt{2}} |\Xi(x,\xi)| +
	\xi_3}
,
\end{align*}
where $\psi^+_D$ is defined in analogy to
$\psi^-_D$.  Namely, $\psi^+_D$ has the
properties
$\psi^+_D(\xi)\in C^{\infty}(\widetilde{\lrc}^+)$,
$\psi^+_D(\xi)= \psi^+_D(\xi/|\xi|)$ for $|\xi|\ge 1$,
and such that
$\psi^+_D \equiv 1$ on $\mbox{supp } \psi^+$.  Also, the restriction to
the disc, $\psi^+_D\big|_{\{|\xi|\le 1\}}$, has relatively compact support in
in the interior of $\widetilde{\lrc}^+$.

Then
the composition of operators
\begin{equation*}
\left(  \frac{1}{\sqrt{2}} N_1^-
-iT^0  \right) \circ Op(\alpha^{\psi^+_D})
\end{equation*}
has as symbol
\begin{align*}
\sigma\left[ \left(  \frac{1}{\sqrt{2}} N_1^-
-iT^0 \right) \circ Op(\alpha^{\psi^+_D})\right]=&
\sigma\left(  \frac{1}{\sqrt{2}} N_1^-
-iT^0  \right) \sigma(\alpha^{\psi^+_D})   \\
=&
\left( \frac{1}{\sqrt{2}} |\Xi(x,\xi)| +
\xi_3 \right)
\frac{\psi^+_D(\xi)}{\frac{1}{\sqrt{2}} |\Xi(x,\xi)| + \xi_3}\\
=& \psi^+_D(\xi)
\end{align*}
modulo $\mathcal{S}^{-1}(\partial\Omega)$.  Furthermore, the
same calculations give
\begin{equation*}
\left( \frac{1}{\sqrt{2}} N_1^-
-iT^0  \right) \circ  \Psi^+\circ Op(\alpha^{\psi^+_D})
=\Psi^+
\end{equation*}
modulo $\Psi^{-1}(\partial\Omega)$.

We thus choose $u_b^{1,+}$ according to
\begin{equation}
\label{1+}
u_b^{1,+} =   \left[ Op(\alpha^{\psi^+_D})\left(-\frac{2}{\sqrt{2}}R\circ
\Theta^-
f_1
\right)\right]^{\psi^+}.
\end{equation}
Then, from above,
\begin{align*}
\left( \frac{1}{\sqrt{2}} N_1^-
-iT^0  \right)u_b^{1,+}
= & \left(\frac{1}{\sqrt{2}} N_1^-
-iT^0  \right) \circ  \Psi^+\circ Op(\alpha^{\psi^+_D})
\left(-\frac{2}{\sqrt{2}}R\circ \Theta^-
f_1
\right) \\
=&  -\frac{2}{\sqrt{2}} \left(R\circ \Theta^-
f_1\right)^{\psi^+} + \Psi_b^{-1}\circ R\circ\Psi^{-1} f
.
\end{align*}

Then with  $u_b^{1,+}$ according to \eqref{1+} and with
$u_{b}^{2,+}=0$, \eqref{bndrycndn+} is satisfied, modulo error
terms of the form $ \Psi_b^{-1}\circ R\circ\Psi^{-1} f$.

It remains to choose $u_b^0$, for which we recall has to include a
contribution to handle the error term, $\Psi^{-1}_0\phi_b'$
arising in
the construction of $u_b^{j,-}$ in \eqref{u2-cndn}.  In the region
$\lrc^0$ we can invert the operator $\Lb_1$ in a similar way we
dealt with $\frac{1}{\sqrt{2}} N_1^- -iT^0 $ in
$\widetilde{\lrc}^+$ since
\begin{equation*}
\sigma(\Lb_1) \gtrsim |\xi_1 + i \xi_2| \gtrsim |\xi|.
\end{equation*}
Our choice for $u_{b}^{1,0}$ and $u_{b}^{2,0}$ is analogous
(but reversed) to the case of $u_{b}^{1,+}$ and $u_{b}^{2,+}$
above. Namely, we take
$u_{b}^{1,0}=0$ and $u_{b}^{2,0}$ to be given by
\begin{equation}
\label{20}
u_{b}^{2,0}: =  Op(\beta^{\psi^0_D})\circ \left(\frac{2}{\sqrt{2}}R\circ
\Theta^-
f_1\right)^{\psi^0}
+ Op(\beta^{\psi^0_D})\circ \Psi^{-1}_0\phi_b',
\end{equation}
where
\begin{equation*}
\beta^{\psi^0_D}(x,\xi) =
\frac{\psi^0_D(\xi)}{\sigma(\Lb_1)},
\end{equation*}
and $\psi^0_D(\xi)$ is defined so that
$\psi^0_D(\xi)\in C^{\infty}(\lrc^0)$ and
$\psi^0_D\equiv 1$ on $\mbox{supp }\psi^0$,
with the additional condition that $\psi^0_D\equiv 1$
on the projection of $\mbox{supp}\left(\sigma \left(\Psi^{-1}_0\right)\right)$ onto
$\lrc^0$ (here we make use of the assumption outlined in
Section \ref{notn} that $\psi^-\equiv 1$ in a neighborhood of
$\lrc^-\cap (\lrc^0)^c$).

With $u_{b}^{1,0}$ and $u_{b}^{2,0}$ so chosen, we have
\begin{equation*}
\left( \frac{1}{\sqrt{2}} N^-_1
-iT^0\right)u_{b}^{1,0} +
\Psi^0_b u_{b}^{0} - \Lb_1 u_{b}^{2,0}
=  -\frac{2}{\sqrt{2}}\left( R\circ \Theta^-
f_1\right)^{\psi^0}-\Psi^{-1}_0\phi_b'
\end{equation*}
modulo error terms of the form $\Psi_b^{-1}\circ
R\circ \Psi^{-1} f$, and
$\Psi^{-2}_b \phi_b'$.

The solution to \eqref{bndrycndn} now comes from
\begin{equation*}
u_b^{j} = u_b^{j,-} + u_b^{j,0} + u_b^{j,+}
\end{equation*}
for $j=1,2$.  From above, we have the properties:
\begin{align}
\label{defnu1}
u_b^1=&
\Psi^-\circ\left(N^-_1\right)^{-1}\circ
R\circ \Theta^{-} f_1 +  \left[ Op(\alpha^{\psi^+_D})\left(-\frac{2}{\sqrt{2}}R\circ
\Theta^-
f_1
\right)\right]^{\psi^+}\\
\nonumber
=& \Psi_b^{-1}\circ R\circ\Psi^{-1} f_1
\end{align}
and
\begin{align}
\label{defnu2}
u_b^2=& -\frac{1}{\sqrt{2}}\Psi^-\circ
\left(N^-\right)^{-1}
\phi_b'-
 \Psi^-\circ\left(N^-\right)^{-1}\circ
  R\circ\Theta^-
f_2\\
\nonumber
&+  Op(\beta^{\psi^0_D})\circ \left(\frac{2}{\sqrt{2}}R\circ \Theta
f_1\right)^{\psi^0}
+ Op(\beta^{\psi^0_D})\circ \Psi^{-1}_0\phi_b'\\
\nonumber
=& \Psi^{-1}_b\phi_b' +\Psi_b^{-1}\circ R\circ\Psi^{-1} f.
\end{align}

Furthermore, on the boundary
\begin{align*}
\left(\frac{1}{\sqrt{2}} N^-_1
-iT^0\right)u_b^1 -s_0 u_b^1 -\Lb_{b1}u_b^2
+ \frac{1}{\sqrt{2}} N^-_2u_b^2 = -\frac{2}{\sqrt{2}}
R\circ \Theta^{-} f_1
,
\end{align*}
modulo
\begin{equation*}
\Psi^{-2}_b \phi_b'
 +\Psi_b^{-2} \circ R f_1
 +\Psi_b^{-1} \circ R\circ
 \Psi^{-1} f.
\end{equation*}

Before we handle estimates we recall a definition 
 we made in \cite{Eh18_halfPlanes} which classified 
some of the pseudodifferential operators which arise
 in this article:
\begin{defn}
	We say an operator $B\in \Psi^{-k}$
	for $k\ge 1$ is
	{\it decomposable} if for any $N\ge k$ 
	it can be written in the form 
	\begin{equation*} 
	B = A_{-k} + \Psi^{-N},
	\end{equation*}
	where $A_{-k}\in \Psi^{-k}$ is an operator 
	satisfying the condition
 that the symbol,
$\sigma(A)(x,\rho,\xi,\eta)$, 
is meromorphic (in $\eta$) with poles at 
\begin{equation*}
\eta=q_1(x,\rho,\xi), \ldots, q_k(x,\rho,\xi)
\end{equation*}	
with $q_i(x,\rho,\xi)$ themselves symbols of 
pseudodifferential operators of
order 1 (restricted to $\eta=0$),
 and with the imaginary parts of the poles,
 $q_i(x,\rho,\xi)$ being elliptic operators,
such that for each 
$\rho$, $\mbox{Res}_{\eta=q_i}
\sigma(A) \in \mathcal{S}^{k+1}
(\mathbb{R}^n)$ with symbol estimates
uniform in the $\rho$ parameter. 
\end{defn}

For such decomposable operators we will use the
 following estimates
(see Theorems 2.3 and 2.5 in \cite{Eh18_halfPlanes}):
\begin{thrm}
\label{thrmSobIntEst}
	Let $f\in W^s(\Omega)$ for 
	$s\ge 0$.  Let
	$A\in \Psi^{-k}(\mathbb{R}^{4})$, $k\ge 1$ be a
decomposable operator.  Then
\begin{equation*}
	\|A f\|_{W^s(\Omega)} \lesssim \|
	f\|_{W^{s-k}(\Omega)}.
\end{equation*}
\end{thrm}
Note that these estimates are not immediate,
 as we consider a function supported on the
domain, $\Omega$, to be extended to all of 
 $\mathbb{R}^4$ when it is the argument of a
pseudodifferential operator. 

All pseudodifferential operators above of
 the form $\Psi^{-k}$ for $k\ge 1$
are decomposable as the arise from the inverses to 
differential elliptic operators.
 
We have the following estimates for our
 boundary solution:
\begin{prop}
\label{propEstub}
With $u_b = u_b^1 \omegab_1 +
 u_b^2 \omegab_2$, and
$u_b^1$ and $u_b^2$ defined according to 
 \eqref{defnu1} and
 \eqref{defnu2}, we have
\begin{equation*}
 \|u_b\|_{W^{s+1/2}(\partial\Omega)}
  \lesssim \|f\|_{W^{s}(\Omega)}
\end{equation*}
for $s\ge 0$.
\end{prop}
\begin{proof}
For $u_b^1$ defined as in \eqref{defnu1} we have
	estimates
\begin{align*}
	\|u_b^1\|_{W^{s+1/2}(\partial\Omega)} \lesssim&
	\| \Psi_b^{-1}\circ R\circ\Psi^{-1} f_1\|_{W^{s+1/2}(\partial \Omega)}\\
	\lesssim&
	\|  R\circ\Psi^{-1} f_1\|_{W^{s}(\partial \Omega)}\\
	\lesssim&
	\| \Psi^{-1}f\|_{W^{s+1/2}( \Omega)}\\
	\lesssim&
	\| f\|_{W^{s-1/2}( \Omega)}.
\end{align*}
The estimates moving from the
	boundary to the whole domain in the
	third step above generally work with
	the hypothesis that $s$ is strictly
	greater than $0$.  With a little
	extra effort (using that
the $\Psi^{-1}$ operator comes from
 $\Theta^-$ and thus defines a solution
  to an elliptic equation), 
 the estimates can be generalized 
	to the case $s\ge 0$. 
In the last step we used Theorem \ref{thrmSobIntEst}
 for the decomposable $\Psi^{-1}$ operator.	 

In estimating $u_b^{2,-}$ 
 we will use \cite{MS01}
 (see Equation 2.6 of the article)
for estimates involving $f_1$, the 
coefficient of the component 
orthogonal to $\mdbar\rho$.
In particular,
\begin{align}
\nonumber
 \| f_1 \|_{W^{s-1/2}(\partial\Omega)}
  \lesssim& \|f\|_{W^{s}(\Omega)}
  + \|\mdbar f\|_{W^{s}(\Omega)}\\
\label{f1Est}
 \lesssim& \|f\|_{W^{s}(\Omega)}
\end{align}
for $s\ge 0$.  Thus,
for $u_b^2$ defined as in \eqref{defnu2} we have estimates
\begin{align*}
	\|u_b^2\|_{W^{s+1/2}(\partial\Omega)} \lesssim&
	\left\| \Psi_b^{-1}
	\phi_b'+   \Psi_b^{-1}\circ R\circ\Psi^{-1} f
	\right\|_{W^{s+1/2}(\partial\Omega)}\\
	\lesssim&
	\| \phi_b'\|_{W^{s-1/2}(\partial\Omega)}
	+  \left\| \Psi_b^{-1}\circ R\circ\Psi^{-1} f
	\right\|_{W^{s+1/2}(\partial\Omega)}\\
	\lesssim&
	\| Rf_1\|_{W^{s-1/2}(\partial\Omega)}
	+  \left\|  f
	\right\|_{W^{s-1/2+\epsilon}(\Omega)}\\
	\lesssim&
	\| f\|_{W^{s}(\Omega)},
	\end{align*}
	where $\epsilon$ is a small positive
	number.  
\end{proof}

\section{Estimates for the \dbar-problem}
We now obtain estimates on our solution.
\begin{thrm}
	\label{allestu}
	Let $u$ be defined by \eqref{solnpg},
	\eqref{defnu1}, and \eqref{defnu2}.  Then $u$ satisfies
	\begin{equation*}
	\square u = f  \qquad \mbox{on }\Omega,
	\end{equation*}
	modulo smooth terms,
	with the boundary relation
\begin{equation*}
\mdbar u \rfloor \mdbar\rho
 \Big|_{\partial\Omega}
=
	 R\circ
	\Psi^{-2} f+
	\Psi_b^{-2}\circ R\circ \Psi^0
	f
	+  \Psi_b^{-1}\circ R\circ \Psi^{-1} f +\Psi^{-2}_b
	\phi_b',
	\label{bndryref}
\end{equation*}
modulo smooth terms, denoted
 $R_b^{-\infty}$, which can be 
estimated according to 
\begin{equation}
\label{estSmooth}
 \|R_b^{-\infty}\|_{W^s(\partial\Omega)}
  \lesssim \| u_b \|_{L^2(\partial\Omega)}
\end{equation}
for all $s\ge 0$, and
	where $\phi_b'$ is defined as in \eqref{defnphib}
	
	 Furthermore, we
	have the estimates
\begin{equation*}
	\label{estu}
	 \|u \|_{W^{s+1}(\Omega)} \lesssim \|f\|_{W^{s}(\Omega)}
\end{equation*}
	for $s\ge 0$.
\end{thrm}
\begin{proof}

We recall $u$ as defined by
	\eqref{solnpg}:
\begin{equation*}
	u=G(2f)+
	P(u_b).
\end{equation*}
	We can use the estimate 
from Proposition \ref{propEstub} in
	Theorems \ref{poissest} and \ref{greenest} to estimate the terms
	$G(2f) + P(u_b)$,
	leading to
\begin{align*}
	\| u\|_{W^{s}(\Omega)} \lesssim &
	\| G(2f) + P(u_b) \|_{W^s(\Omega)}
	\\
	\lesssim &
	\| f\|_{W^{s-2}(\Omega)} + \|u_b \|_{W^{s-1/2}(\partial\Omega)}\\
	\lesssim &
	\| f\|_{W^{s-2}(\Omega)} + \|f \|_{W^{s-1}(\Omega)}.
\end{align*}
\end{proof}

We can now 
 construct a solution to the equation $\mdbar
\phi =f $ with $f$ a $(0,1)$-form and prove our Main Theorem.  The
form, $f$, in this section will therefore satisfy the compatibility
condition $\mdbar f=0$.
We prove the
\begin{thrm}
	\label{solnthrm}
	Let $\Omega\subset \mathbb{C}^2$ be a smoothly bounded
	pseudoconvex domain.
	Let $f\in W^{s}_{(0,1)}(\Omega)$ such that $\mdbar f =0$.
	Suppose there exists a solution operator,
	$K_b$ such that $\mdbar_b K_b g = g$ for $g\in
	A^0_b(\partial\Omega)$ and
	$K_b: A^s_b(\partial\Omega) \rightarrow
	W^{s}(\Omega)$ for all $s\ge -1/2$.
	Then there exists a solution operator, $K$, such that
	\begin{equation*}
	\mdbar K f = f
	\end{equation*}
	with the continuity property $K: W^{s}_{(0,1)}(\Omega)\cap
	\mbox{ker}(\mdbar)
	\rightarrow W^{s}(\Omega)$
 for all $s\ge 0$.
\end{thrm}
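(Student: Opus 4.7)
The plan is to set $\phi:=\vartheta u$, where $u$ is the approximate solution to the modified boundary value problem constructed in Theorem \ref{allestu}, and then verify that $\mdbar\phi=f$ modulo a remainder that is strictly smoother than $f$, so that a Neumann series in the data provides the solution operator $K$.

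First I would invoke the hypothesis on $K_b$: since $\mdbar f=0$ and $f\in W^{s}_{(0,1)}(\Omega)$, the trace $R f_1$ belongs to $A_b^{s-1/2}(\partial\Omega)$ (as noted before \eqref{defnphib}), and so $\phi_b':=K_b(Rf_1)\in W^{s-1/2}(\partial\Omega)$ satisfies $\Lb_1\phi_b'=Rf_1$. Using this $\phi_b'$ I would build $u_b=u_b^1\omegab_1+u_b^2\omegab_2$ via \eqref{defnu1}--\eqref{defnu2} and then $u=\widetilde{G}(2f)+\widetilde{P}(u_b)$ as in \eqref{solnpgapp}. Theorem \ref{allestu} then delivers
\begin{equation*}
\square' u = f+\Psi^{-1}f+\Psi^{-1}u_b,\qquad \|u\|_{W^{s+1}(\Omega)}\lesssim \|f\|_{W^{s}(\Omega)},
\end{equation*}
together with the relaxed boundary relation \eqref{bndryref}. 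Consequently $\phi=\vartheta u$ satisfies $\|\phi\|_{W^{s}(\Omega)}\lesssim \|f\|_{W^{s}(\Omega)}$.

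Next I would compute $\mdbar\phi$. Using $\square'=\mdbar\vartheta+\vartheta\mdbar'$,
\begin{equation*}
\mdbar\phi=\mdbar\vartheta u=\square' u-\vartheta\mdbar' u= f-\vartheta\mdbar' u+\Psi^{-1}f+\Psi^{-1}u_b,
\end{equation*}
so everything reduces to controlling $\vartheta\mdbar' u$. The key structural observation is that $\mdbar'u$ is a $(0,2)$-form whose scalar coefficient is, up to $0$-order terms, precisely the combination $\Lb_2u_1-s_0u_1-\Lb_1u_2+\Phi^0_{b,2}u_2$ appearing in the boundary condition \eqref{bv}. Hence that coefficient vanishes on $\partial\Omega$ modulo a $0$-order operator applied to $u_b$, which by Theorem \ref{allestu} is controlled by $\|f\|_{W^s}$ in $W^{s+1/2}(\partial\Omega)$. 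Combining this with the identity $\mdbar(\vartheta\mdbar' u)=\mdbar f=0$ (which holds in $\C^2$ because $\mdbar\mdbar' u$ is a $(0,3)$-form and therefore vanishes), and with the explicit representation of $u_b^{2,-}$ through $\phi_b'$ so that $\Lb_1 u_b^{2,-}$ is matched with $R f_1$ up to $0$-order terms, one obtains a decomposition
\begin{equation*}
\vartheta\mdbar' u = \Psi^{-1/2}_{\mathrm{new}} f+\Psi^{-\infty}f,
\end{equation*}
where $\Psi^{-1/2}_{\mathrm{new}}$ gains (at least) half a derivative on the data. This step is the main technical obstacle: it requires checking, symbol by symbol, that the particular form of $\Phi^0_{b,2}$ in \eqref{defnphib2} — together with the Green's-operator symbol from Theorem \ref{greender} and the Poisson symbol \eqref{symlam} — truly cancels the top-order contribution of $\vartheta\mdbar'u$ against the $\Lb_1\phi_b'=Rf_1$ identity.

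Granting that step, define $T: W^{s}_{(0,1)}(\Omega)\cap\ker\mdbar\to W^{s}(\Omega)$ by $Tf:=\phi=\vartheta u$, and let $E f:=\mdbar T f-f$. Then $E f\in\ker\mdbar$ as well (since $\mdbar E f=-\mdbar f=0$), and the analysis above gives $\|E f\|_{W^{s+1/2}(\Omega)}\lesssim \|f\|_{W^{s}(\Omega)}$ for every $s>1/2$. Iterating on the strictly smoother remainder, the operator
\begin{equation*}
K:=\sum_{k=0}^{\infty}T\circ(-E)^{k}
\end{equation*}
is bounded $W^{s}_{(0,1)}(\Omega)\cap\ker\mdbar\to W^{s}(\Omega)$ because the half-derivative gain makes the series converge in $W^{s}(\Omega)$ (the condition $s>1/2$ is used to feed each $E^k f$ back into the construction, which rests on the trace of the first component lying in $A_b^{s-1/2}$). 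By construction $\mdbar K f=f+\lim_k (-E)^{k+1}f=f$, which completes the proof.
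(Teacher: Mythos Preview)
Your setup through $\mdbar(\vartheta u)=f-\vartheta\mdbar' u+\Psi^{-1}f+\Psi^{-1}u_b$ matches the paper exactly. After that, however, two genuine gaps appear.

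First, the Neumann series $K=\sum_{k\ge 0}T\circ(-E)^k$ need not converge. A half-derivative gain $\|Ef\|_{W^{s+1/2}}\lesssim\|f\|_{W^s}$ tells you nothing about the $W^s\to W^s$ operator norm of $E$; it could easily be $\ge 1$, in which case $\sum(-E)^k$ diverges in $W^s$. Smoothing makes $E$ compact on $W^s$ (by Rellich), but compactness does not force spectral radius $<1$. The paper avoids iteration altogether: it invokes Straube's linear solution operator $S_{-\delta}\colon W^k\to W^{k-\delta}$ (Theorem~5.3 in \cite{St}) once on the remainder. Since the remainder lies in $W^{s+1}$ (a full derivative better than $f$), losing $\delta<1$ still lands in $W^s$, and $Kf:=\vartheta u+S_{-\delta}(f-\mdbar\vartheta u)$ is the desired operator.

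Second, your control of $\vartheta\mdbar' u$ is not only sketched but rests on an incorrect identity. From $\square' u=f+\Psi^{-1}f+\Psi^{-1}u_b$ one gets $\mdbar\vartheta\mdbar' u=\mdbar\square' u=\Psi^0 f+\Psi^0 u_b$, not $\mdbar f=0$; the parenthetical about $(0,3)$-forms does not yield $\mdbar(\vartheta\mdbar' u)=0$. The paper's route is Lemma~\ref{dbaruest}: one observes that $\mdbar' u$ solves the elliptic Dirichlet problem $\mdbar\vartheta(\mdbar' u)=\Psi^0 f+\Psi^0 u_b$ with boundary value given by \eqref{bndryref}, and then applies the Green/Poisson estimates for $(0,2)$-forms to obtain $\|\mdbar' u\|_{W^{s+2}}\lesssim\|f\|_{W^s}$, hence $\|\vartheta\mdbar' u\|_{W^{s+1}}\lesssim\|f\|_{W^s}$. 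This full-derivative gain is what makes the single application of $S_{-\delta}$ work.
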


We base our construction of the solution operator on our solution
to the boundary value problem
\begin{equation}
\label{bvpint}
\square u = f \qquad \mbox{on }\Omega,
\end{equation}
with the boundary condition
\begin{equation}
\overline{L}_2 u_1 -s_0 u_1-\Lb_1u_2=
 R\circ
\Psi^{-2} f+
 \Psi_b^{-2}\circ R f
+  \Psi_b^{-1}\circ R\circ \Psi^{-1} f  +\Psi^{-2}_b
\phi_b',
\label{dbarbndry}
\end{equation}
 modulo smooth terms estimated 
by \eqref{estSmooth},
with $\phi_b'$ given by \eqref{defnphib}.
Theorem \ref{allestu} gave estimates of our chosen solution.
In addition we prove estimates for $\mdbar u$:
\begin{lemma}
	\label{dbaruest}
	\begin{equation*}
	\|\mdbar u\|_{W^{s+2}(\Omega)} \lesssim \|f\|_{W^{s}(\Omega)}.
	\end{equation*}
\end{lemma}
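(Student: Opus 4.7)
The plan is to view $w := \mdbar' u$ as a top-degree $(0,2)$-form on $\Omega\subset\mathbb{C}^2$, and to observe that on such forms $\mdbar\vartheta$ is a second-order elliptic operator acting on the scalar coefficient. The strategy is then to combine an elliptic interior equation for $w$ with a high-regularity estimate on its boundary trace, and to invoke standard Dirichlet elliptic regularity.

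For the interior equation, applying $\mdbar$ to the equation in Theorem~\ref{allestu} and using $\mdbar f = 0$ together with $\mdbar^2 = 0$ gives
\[
\mdbar\vartheta w \;=\; \mdbar\bigl(f + \Psi^{-1}f + \Psi^{-1}u_b\bigr) \;=\; \Psi^0 f + \Psi^0 u_b.
\]
Writing $w = g\,\omegab_1\wedge\omegab_2$ and expanding $\mdbar u$ in the frame (using $\mdbar\omegab_1 = s\,\omegab_1\wedge\omegab_2$, $\mdbar\omegab_2 = 0$, and $\Phi^0_1\equiv 0$) yields $g = -\Lb_2 u_1 + s u_1 + \Lb_1 u_2 + \Phi^0_2 u_2$. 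Restricting to $\partial\Omega$, where $s = s_0$ and the symbol of $\Phi^0_2$ agrees with that of $\Phi^0_{b,2}$, and substituting the boundary relation \eqref{bndryref}, expresses $g|_{\partial\Omega}$ as a sum of the explicit smoothing terms on the right of \eqref{bndryref} plus a correction of the form $2\Phi^0_{b,2}u_b^2$.

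The key subtlety is that $\Phi^0_{b,2}$ is only of order~$0$, so the correction term appears to be of only moderate boundary regularity. However, the explicit formula \eqref{defnu2} gives $u_b^2 = \Psi_b^{-1}\phi_b' + \Psi_b^{-1}R\Psi^{-1}f$, so the composition $\Phi^0_{b,2}\circ\Psi_b^{-1}$ is in fact of order $-1$, turning the correction into a smoothing operator of the same effective order as the remaining explicit terms. Combining this with the Sobolev trace theorem and the hypothesis $\|\phi_b'\|_{W^{s-1/2}(\partial\Omega)} \lesssim \|f\|_{W^s(\Omega)}$ (which follows from the regularity of $K_b$ applied to $Rf_1$) produces the boundary bound $\bigl\|g|_{\partial\Omega}\bigr\|_{W^{s+3/2}(\partial\Omega)} \lesssim \|f\|_{W^s(\Omega)}$.

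I would then invoke standard elliptic Dirichlet regularity for $\mdbar\vartheta$,
\[
\|w\|_{W^{s+2}(\Omega)} \;\lesssim\; \|\mdbar\vartheta w\|_{W^s(\Omega)} + \bigl\|g|_{\partial\Omega}\bigr\|_{W^{s+3/2}(\partial\Omega)},
\]
and bound the interior term by $\|\Psi^0 f\|_{W^s(\Omega)}\lesssim\|f\|_{W^s(\Omega)}$ together with $\|\Psi^0 u_b\|_{W^s(\Omega)}\lesssim\|u_b\|_{W^{s+1/2}(\partial\Omega)}\lesssim\|f\|_{W^s(\Omega)}$ via the trace-type half-derivative gain of Lemma~\ref{estfrombndry} and \eqref{estub}. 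The main obstacle is the boundary step above, which requires the sign bookkeeping for $g|_{\partial\Omega}$ and the extraction of the effective order~$-1$ from the apparently order~$0$ correction $\Phi^0_{b,2}u_b^2$ using the specific construction \eqref{defnu2} of $u_b^2$.
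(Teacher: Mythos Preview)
Your overall strategy is the same as the paper's: apply $\mdbar$ to \eqref{bvpint} to obtain the elliptic equation $\mdbar\vartheta w=\Psi^0 f+\Psi^0 u_b$ for $w=\mdbar'u$ on $(0,2)$-forms, establish a $W^{s+3/2}(\partial\Omega)$ bound on $w|_{\partial\Omega}$, and then invoke Dirichlet regularity. The paper phrases the last step via the Green and Poisson operators $G^2$, $P^2$ for $\mdbar\vartheta$ on top-degree forms (Theorems~\ref{greenest} and~\ref{poissest}); this also cleanly absorbs the $\Psi^0u_b$ contribution as $G^2\circ\Psi^0 u_b=\Psi^{-2}u_b$, to which Lemma~\ref{estinvell} (rather than Lemma~\ref{estfrombndry}, which concerns only tangential norms and operators of order $\le-1$) applies.

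There is, however, a genuine gap in your boundary step. You claim the correction $2\Phi^0_{b,2}u_b^2$ is ``of the same effective order as the remaining explicit terms,'' but it is not. From \eqref{defnu2} one has $u_b^2=\Psi_b^{-1}\phi_b'+\Psi_b^{-1}R\Psi^{-1}f$, and composing with the order-$0$ operator $\Phi^0_{b,2}$ produces a term of the form $\Psi_b^{-1}\phi_b'$. To place this in $W^{s+3/2}(\partial\Omega)$ you would need $\phi_b'\in W^{s+1/2}(\partial\Omega)$, whereas the hypothesis on $K_b$ applied to $Rf_1\in W^{s-1/2}(\partial\Omega)$ gives only $\phi_b'\in W^{s-1/2}(\partial\Omega)$---one full derivative short. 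The smoothing terms that actually appear on the right of \eqref{bndryref} carry $\Psi_b^{-2}\phi_b'$, not $\Psi_b^{-1}\phi_b'$, and that extra order is precisely what makes the boundary estimate close. The paper avoids this issue entirely by reading \eqref{bndryref} as giving $\mdbar'u|_{\partial\Omega}$ directly (equation~\eqref{dpr}): the boundary operator in \eqref{bv} and the zeroth-order modification defining $\mdbar'$ are set up so that they agree on $\partial\Omega$, so no residual $\Phi^0_{b,2}u_b^2$ term is left to estimate. Any apparent mismatch you found is a sign convention in the setup, not a term the proof must control.
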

\begin{proof}

On the boundary $\mdbar u$ has the property
\begin{equation}
	\label{dpr}
\mdbar u\big|_{\partial\Omega} =  R\circ
	\Psi^{-2} f
+\Psi_b^{-2}\circ R f_1
	+  \Psi_b^{-1}\circ R\circ \Psi^{-1} f
	+\Psi^{-2}_b
	\phi_b'
\end{equation}
modulo smooth terms, by Theorem \ref{allestu}.
 Furthermore, we have the estimates
\begin{equation*}
	\|\mdbar
	 u\big|_{\partial\Omega}
\|_{W^{s+3/2}(\partial\Omega)}
	\lesssim \|f\|_{W^{s}(\Omega)}
\end{equation*}
	which follows from investigating each term on the right-hand side
	of \eqref{dpr}, as
well as the estimates of the smooth terms
 from \eqref{estSmooth}.  Terms of the form
	$ \Psi_b^{-1}\circ R\circ \Psi^{-1} f $ are handled as in Proposition
	 \ref{propEstub}.
	Furthermore, we show
\begin{align*}
	\| \Psi_b^{-2}\circ R  f_1\|_{W^{s+3/2}(\partial\Omega)}
	\lesssim & \|R  f_1\|_{W^{s-1/2}(\partial\Omega)}\\
	\lesssim & \|f\|_{W^{s}(\Omega)},
\end{align*}
and with Theorem \ref{thrmSobIntEst},
\begin{align*}
	\| R\circ
	\Psi^{-2} f\|_{W^{s+3/2}(\partial\Omega)}
	\lesssim & \| \Psi^{-2} f\|_{W^{s+2}(\Omega)}\\
	\lesssim & \|f\|_{W^{s}(\Omega)},
\end{align*}
	and
	\begin{align*}
	\|  \Psi^{-2}_b  \phi_b' \|_{W^{s+3/2}(\partial\Omega)}
	\lesssim&    \|  \phi_b' \|_{W^{s-1/2}(\partial\Omega)}\\
	\lesssim&    \|  R  f_1 \|_{W^{s-1/2}(\partial\Omega)}\\
	\lesssim&    \|   f \|_{W^{s}(\Omega)}.
	\end{align*}

	As $u$ is a solution to
\begin{equation}
	\label{deluv}
	\square u = f
\end{equation}
	and as $ f$ is \dbar-closed, we can apply
	$\mdbar$ to both sides of \eqref{deluv} to obtain
\begin{align*}
	0=&\mdbar\square u\\
	=& \mdbar\vartheta\mdbar u
\end{align*}
	i.e.,
\begin{align*}
	\mdbar\vartheta\mdbar u   =& 0
\end{align*}
	with a Dirichlet condition with respect to $\mdbar u
	\big|_{\partial\Omega}$ given above. In terms of a Green's
	operator and Poisson's operator (on the level of $(0,2)$-forms
	with respect to the operator $\mdbar\vartheta$; we denote these
	operators $G^2$ and $P^2$, respectively) we have
\begin{equation*}
	\mdbar u = G^2 \left(0\right) + P^2
	\left(\mdbar u \big|_{\partial\Omega}\right).
\end{equation*}
	Theorems \ref{greenest} and \ref{poissest}, or rather the case
	relating a combination of the Theorems in which estimates for the
	solution $v=G^2(g)+P^2(v_b)$
	to the boundary value problem
	\begin{equation*}
	\mdbar\vartheta v = g
	\end{equation*}
	with boundary value $v\big|_{\partial\Omega}=v_b$
	are given as
\begin{equation*}
	\|v\|_{W^{s+2}(\Omega)}
	\lesssim \| g\|_{W^{s}(\Omega)} +
	\|v_b\|_{W^{s+3/2}(\partial\Omega)},
\end{equation*}
	lead to the estimates
\begin{align*}
	\| \mdbar u \|_{W^{s+2}(\Omega)}
	\lesssim & \left\|\mdbar u \big|_{\partial\Omega}
	\right\|_{W^{s+3/2}(\partial\Omega)}\\
	\lesssim & \|f\|_{W^{s}(\Omega)}
\end{align*}
from the boundary 
	 relation in Theorem \ref{allestu}.
\end{proof}

\begin{proof}[Proof of Theorem \ref{solnthrm}]
	We first consider
\begin{align}
	\mdbar \left(\vartheta u \right)
	\nonumber
	=&\square u -\vartheta\mdbar u\\
	\label{dbar1}
	=& f -\vartheta\mdbar u,
\end{align}
	modulo smooth terms.
	The term $\vartheta\mdbar u$ can be estimated by Lemma
	\ref{dbaruest}.
	We let the operator
	$S:W^k(\Omega) \rightarrow W^{k-\delta}(\Omega)$ 
	(for all $\delta>0$),
	$k\ge 1$, be the
	linear solution operator 
	of Sibony-Straube to
\begin{equation}
	\label{dv}
	\mdbar v= \vartheta\mdbar u
\end{equation}
	(see Theorem 5.3
	in \cite{St}) 
Note that from
	\eqref{dbar1} it follows that
	$\vartheta\mdbar u= f-\mdbar \left(\vartheta u \right)$ is \dbar-closed.
Thus, with $v$ defined by
\begin{equation}
	\label{vstr}
	v =S\left(\vartheta\mdbar u \right)
\end{equation}
	we have \eqref{dv},
	and
\begin{align*}
	\|v\|_{W^{s+1-\delta}(\Omega)} =&
	\left\|S \left(
	\vartheta\mdbar u\right)\right\|_{W^{s+1-\delta}(\Omega)}\\
	\lesssim &
	\|\vartheta\mdbar u\|_{W^{s+1}(\Omega)}\\
	\lesssim &\|\mdbar u\|_{W^{s+2}(\Omega)}\\
	\lesssim & \| f\|_{W^{s}(\Omega)} .
\end{align*}

	Then, from \eqref{dbar1}, we have the solution
	$\vartheta u+v$:
	\begin{equation}
	\label{dbareqn}
	\mdbar \left(\vartheta u + v\right) = f
	\end{equation}
	with estimates
	\begin{equation*}
	\| \vartheta u+v \|_{W^{s}(\Omega)} \lesssim \|
	f\|_{W^{s}(\Omega)}.
	\end{equation*}

	To write our solution operator, we recall the operators which
	went into the construction of our
	solution $u$.  The solution $u$ was written
	\begin{equation*}
	u= P(u_b)+G(2f)
	\end{equation*}
	where $u_b$ was chosen via \eqref{defnu1} and \eqref{defnu2}.
	In order to stress the dependence on the data form, $f$,
	we write $u_b^1\omegab_1+u_b^2\omegab_2$ together
	as $U_b(f)$, where $U_b$ represents the operators on the right hand side of the expressions above
	for $u_b^1$ and $u_b^2$.
	Thus, the solution operator, which we define as
$N'$, to the boundary value problem \eqref{bvpint} and
	\eqref{dbarbndry} is given by
	\begin{equation*}
	N'f=
	P \left( U_b(f) \right)
	+ G(2f).
	\end{equation*}
	And finally, the solution operator,
	$K$,
	can be written according to \eqref{dbareqn} as
	\begin{equation*}
	K(f) = \vartheta N'f
	+S\left(f-\mdbar\vartheta N'f\right)
	\end{equation*}
	As $K$ consists of compositions of linear operators, so is $K$
	itself.
\end{proof}

\end{document}